\newcommand{\infixand}{\text{ and }}
\newcommand{\infixor}{\text{ or }}
\newcommand{\mathd}{\mathrm{d}}
\newcommand{\nocomma}{}
\newcommand{\nosymbol}{}
\newcommand{\tmdummy}{$\mbox{}$}
\newcommand{\tmname}[1]{\textsc{#1}}
\newcommand{\tmop}[1]{\ensuremath{\operatorname{#1}}}
\newenvironment{proof}{\noindent\textbf{Proof\ }}{\hspace*{\fill}$\Box$\medskip}
\newtheorem{lemma}{Lemma}
\newtheorem{theorem}{Theorem}
\begin{document}

\title{The dyadic and the continuous Hilbert transforms with values in Banach
spaces}

\author{Komla Domelevo and Stefanie Petermichl\footnote{Partially supported by ERC project
CHRiSHarMa no. DLV-682402 and the Alexander von Humboldt foundation}}

\maketitle

\begin{abstract}
  We show that if the Hilbert transform with values in a Banach space is $L^p$
  bounded, then so is the so called dyadic Hilbert transform, a new dyadic shift operator, with a linear relation of
  the bounds. 
\end{abstract}

\section{Introduction}

It is a known fact that the Hilbert transform with values in a Banach space is
bounded if and only if the Banach space has the UMD property. The relationships of the norms, however, remains
unclear. If the $L^p$ norm of the Hilbert transform is noted $h_p$ and the UMD
constant is noted $m_p$, then it is known that
\[ m^{1/2}_p \lesssim h_p \lesssim m^2_p . \]
The lower estimate on the left hand side is due to {\tmname{Bourgain}}
{\cite{Bou1983a}} and the estimate on the right hand side is due to
{\tmname{Burkholder}} {\cite{Bur1983a}}. It is an open question whether these
relations can be improved, ideally if they are linear.

\

Using the classical Haar shift of {\tmname{Petermichl}} \cite{Pet2000}, it was shown by
{\tmname{Petermichl}}--{\tmname{Pott}} {\cite{PetPot2003a}} that if
$s^{\tmop{cl}}_p$ is the $L^p$ bound for the classical Haar shift, then $s^{\tmop{cl}}_p
\leqslant m^2_p$. Their short argument gives an alternative to Burkholder's
direction. Through averaging, which is the main idea of the classical dyadic
shift, it is clear that $h_p \lesssim s^{\tmop{cl}}_p$. While the classical
shift has proven a useful model for the Hilbert transform for some
applications, it lacks a number of defining properties and similarities. 
In this paper we consider what we call the dyadic Hilbert transform
\[ \mathcal{S}_0 : h_{I_{\pm}} \mapsto \pm h_{I_{\mp}} . \]
While this operator does average to a 0 multiple of the Hilbert transform via the ideas in \cite{Pet2000} and
is therefore not applicable in the same way as the classical shift, it is in
other ways incomparably closer to the Hilbert transform. Its square is the negative identity, it is antisymmetric 
and -- apparently importantly for this subject -- it has no even component.

\

If  $\mathcal{S}_0$ has the $L^p$ bound $s_p$, it is our aim to show in this paper that Bourgain's
direction 
holds with a linear relation for this pair of operators:
\[ s_p \lesssim h_p . \]
This lower bound for the norm of Hilbert transform is surprising and new -- prior
dyadic shift arguments have all been based on convex hull arguments and
averaging, and as such they are a priori useful for upper estimates only. This
lower bound requires a deeper understanding and a better model. Indeed, the
reader will see when reading the proof, that this new shift operator mimics the
Cauchy--Riemann equations `in the probability space'. We add new elements to an
argument by {\tmname{Bourgain}} \cite{Bou1983a} using high oscillations. In his work, he
needed to apply the Hilbert transform twice to control the martingale transforms. His beautiful argument
does not seem as if it can be directly improved to change the resulting
quadratic relation. Our relationship of the Hilbert transform to this new Haar shift rather than the
martingale transform is shown to be much more direct and we therefore manage to only use
the Hilbert transform once.

\

It is also highly interesting that even operators pose fewer problems. Indeed, {\tmname{Geiss}}--{\tmname{Montgomery-Smith}}--{\tmname{Saksman}} \cite{GeiMonSak2010} proved remarkable linear two sided estimates between $m_p$ and the norm of the difference of squares of Riesz transforms in the plane, i.e. the even operator $R_1^2-R_2^2$. Part of their argument is also based on a use of high oscillations. They also gave an upper estimate for even convolution type singular integrals. We wish to highlight a strong result on linear upper estimates on even shift operators and hence more general even Calder\'on--Zygmund operators by {\tmname{Pott}}--{\tmname{Stoica}} \cite{Pot2014}.

\section{Main result}

Let us consider the dyadic system $\mathcal{D}$ and its $L^2$ normalized Haar
functions $\{h_I : I \in \mathcal{D}\}$ on the unit interval $I_0$. Analysts
write the orthonormal basis expansion of a function $f$ as follows:
\[ f (x) = \langle f \rangle_{I_0} + \sum_{I \in \mathcal{D}} (f, h_I) h_I (x)
   = \langle f \rangle_{I_0} + \frac{1}{2} \sum_{I \in \mathcal{D}} (\langle f
   \rangle_{I_+} - \langle f \rangle_{I_-})  (\chi_{I_+} - \chi_{I_-}) (x) .
\]
Any dyadic interval corresponds to a path of sign tosses, with a sequence of
$k$ sign tosses yielding an interval of length $|I_k | = 2^{- k}$ so that
$(I_1, \ldots, I_k)$ corresponds to a sequence $(\varepsilon_1, \ldots,
\varepsilon_k) \in \{-, +\}^k$. Assuming $\langle f \rangle_{I_0} = 0$ we can
relate their martingale difference sequence, using the usual notation, for example from \cite{Bou1983a},
\[ \sum^K_{k = 0} \Delta^f_k (\varepsilon_1, \ldots, \varepsilon_k)
   \varepsilon_{k + 1} \]
to the Haar series by writing
\[ \Delta^f_k (\varepsilon_1, \ldots, \varepsilon_k) = \frac{1}{2} (\langle f
   \rangle_{I_{k +}} - \langle f \rangle_{I_{k -}}) . \]
If $\varepsilon_{k + 1} = \pm$ then we end up in $I_{k \pm}$. In the above
notation we have the constant $\Delta^f_0 = \frac{1}{2} (\langle f
\rangle_{I_{0 +}} - \langle f \rangle_{I_{0 -}})$. Here, $f$ has values in a
Banach space $X$ and so $(f, h_I) \in X$.

The Banach space has the UMD property if there exists a constant $C_p$ so
that for any sign tosses $\alpha_k = \pm 1$ there holds
\[ \left\| \sum_k \alpha_{k + 1} \Delta^f_k (\varepsilon_1, \ldots,
   \varepsilon_k) \varepsilon_{k + 1} \right\|_{L_X^p} \leqslant C_p \left\|
   \sum_k \Delta^f_k (\varepsilon_1, \ldots, \varepsilon_k) \varepsilon_{k +
   1} \right\|_{L_X^p} . \]
The best such constant for $L^p$ is called the $\tmop{UMD}$ constant of $X$ and denoted here $m_p$. 
For basic properties of these spaces see the books by {\tmname{Pisier}} \cite{Pis2016} or {\tmname{Hyt\"{o}nen}}--{\tmname{van Neerven}--{\tmname{Veraar}--{\tmname{Weis} \cite{HytNeeVerWei2016a}.

In the Haar basis notation this becomes with $T_{\alpha} : \langle f
\rangle_{I_0} \mapsto 0, h_I \mapsto \alpha_I h_I$ the requirement that $\sup_{\alpha}\|
T_{\alpha} \|_{L_X^p \mapsto L_X^p} = m_p .$ Recall that with \ $\|
\mathcal{H} \|_{L_X^p \rightarrow L_X^p} = h_p$ it is known that $m_p^{1 / 2}
\lesssim h_p \lesssim m^2_p$. As mentioned before, linear relations on either
side are a famous open problem.

\

Instead of $T_{\alpha}$ we consider the shift operator $\mathcal{S}_0 : \langle
f \rangle_{I_0} \mapsto 0$, $h_{I_0} \mapsto 0$ and $S_0 : h_{I_{\pm}} \mapsto
\pm h_{I_{\mp}}$ for $I \subset I_0$. It is our goal to prove

\begin{theorem}
  \label{theorem-main}Given the $X$ valued shift operator $\mathcal{S}_0 :
  I_0 \rightarrow X$ and the Hilbert transform $\mathcal{H}: \mathbb{T}
  \rightarrow X$ with $X$ a Banach space. There exists an absolute constant
  $C$ so that
  \[ \| \mathcal{S}_0 \|_{L_X^p \rightarrow L_X^p} \leqslant C \| \mathcal{H}
     \|_{L_X^p \rightarrow L_X^p} . \]
\end{theorem}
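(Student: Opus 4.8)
The plan is to transfer the dyadic problem to the circle and let the Hilbert transform carry out the rotation $\mathcal{S}_0$ directly, so that $\mathcal{H}$ is applied only once. First I would reduce to martingales of finite depth $K$, so that $\mathcal{S}_0$ acts on a finite-dimensional space and it suffices to bound it uniformly in $K$. The key structural observation is that, since $\mathcal{S}_0 h_{I_+}=h_{I_-}$ and $\mathcal{S}_0 h_{I_-}=-h_{I_+}$, the operator acts on each sibling block $\mathrm{span}\{h_{I_+},h_{I_-}\}$ as a rotation by a right angle, with eigenvalues $\pm i$. This is exactly the Fourier symbol of $\mathcal{H}$, which on $\mathbb{T}$ sends $e^{in\theta}\mapsto -i\,\mathrm{sgn}(n)e^{in\theta}$, so that $\cos(n\theta)\mapsto\sin(n\theta)$ and $\sin(n\theta)\mapsto-\cos(n\theta)$ for $n>0$. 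The whole strategy is to build an embedding under which each pair $(h_{I_+},h_{I_-})$ becomes such a conjugate pair of oscillations.

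To each dyadic interval $J$ I would assign a nested arc $A_J\subset\mathbb{T}$ mirroring $\mathcal{D}$ and a frequency $N_J$, the frequencies chosen lacunary and governed by a large parameter $N$. I would then define an embedding $\Phi_N$ on each sibling block by sending $h_{J_+}$ to a cosine-type oscillation $\psi_J(\theta)\cos(N_J\theta)$ and $h_{J_-}$ to the sine-type oscillation $\psi_J(\theta)\sin(N_J\theta)$ at the \emph{same} frequency and on the \emph{same} arc, with $\psi_J$ a bump adapted to $A_J$ and a normalisation fixing the $L^p$ scale. Because both siblings are carried on the same arc at the same positive frequency, $\mathcal{H}$ applied to the first member returns, to leading order, the second, and applied to the second returns minus the first; that is, $\mathcal{H}\Phi_N$ agrees with $\Phi_N\mathcal{S}_0$ up to an error term. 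Equivalently, $\Phi_N f+i\,\Phi_N\mathcal{S}_0 f$ is, to leading order, the boundary value of an analytic function, which is the precise sense in which $\mathcal{S}_0$ mimics the Cauchy--Riemann equations in the probability space.

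Next I would establish two norm transferences, both with constants depending only on $p$ and neither on $X$ nor on $K$: an upper estimate $\limsup_N\|\Phi_N f\|_{L^p_X(\mathbb{T})}\le A_p\,\|f\|_{L^p_X(I_0)}$ and a lower estimate $\liminf_N\|\Phi_N g\|_{L^p_X(\mathbb{T})}\ge a_p\,\|g\|_{L^p_X(I_0)}$. The mechanism is high-oscillation averaging: on each arc the phase $N_J\theta$ equidistributes, so by a vector-valued Riemann--Lebesgue/stationary-phase argument the local $L^p$ average of $\|x^+\cos(N_J\theta)+x^-\sin(N_J\theta)\|_X^p$ converges to $\tfrac1{2\pi}\int_0^{2\pi}\|x^+\cos\phi+x^-\sin\phi\|_X^p\,d\phi$, which is comparable, with universal constants, to the contribution of the corresponding martingale block; the lacunarity then decouples the scales so that these local statements assemble into the full martingale norm. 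Crucially this step only averages $\|\cdot\|_X^p$ and never invokes the geometry of $X$, which is exactly what keeps the constants absolute.

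Finally I would combine these. Writing $\mathcal{H}\Phi_N f=\Phi_N\mathcal{S}_0 f+E_N$ with $\|E_N\|\to0$, the two transferences give $a_p\|\mathcal{S}_0 f\|\le\liminf_N\|\Phi_N\mathcal{S}_0 f\|=\liminf_N\|\mathcal{H}\Phi_N f\|\le h_p\,\limsup_N\|\Phi_N f\|\le h_p A_p\|f\|$, whence $\|\mathcal{S}_0\|\le (A_p/a_p)\,h_p$, and letting $K\to\infty$ yields the absolute constant $C=A_p/a_p$. The step I expect to be the main obstacle is controlling $E_N$ uniformly. The Hilbert transform does not commute with the localiser $\psi_J$, so $\mathcal{H}[\psi_J\cos(N_J\theta)]$ differs from $\psi_J\sin(N_J\theta)$ by commutator and tail terms which, through the slow decay of the kernel $1/(s-t)$, leak onto the other arcs. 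Since $X$ is a general Banach space these leakages cannot be absorbed by $L^2$ orthogonality and must instead be beaten by the scale separation built into the lacunary choice of the $N_J$, with bounds that are summable over the entire tree and independent of its depth $K$. Making this error genuinely vanish in the limit, together with securing the universal lower transference bound (the most delicate of the norm estimates), is the technical heart of the argument.
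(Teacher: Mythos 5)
There is a genuine gap, and it is not the error term $E_N$ that you flag as the main obstacle, but the two-sided norm transference that you treat as the routine part. Your claim that $a_p\|g\|_{L^p_X(I_0)}\le\liminf_N\|\Phi_N g\|_{L^p_X(\mathbb{T})}$ and $\limsup_N\|\Phi_N f\|_{L^p_X(\mathbb{T})}\le A_p\|f\|_{L^p_X(I_0)}$ hold with constants independent of the Banach space $X$ cannot be true. The reason is structural: in the Haar martingale, the randomness that selects the path down the tree and the randomness that multiplies the coefficients are the \emph{same} (the value of $h_J$ at $x$ is the sign telling you which child of $J$ contains $x$). In your embedding they are decoupled: which nested arc $\theta$ falls into is a coarse function of $\theta$, while the weights $\cos(N_J\theta)$, $\sin(N_J\theta)$ equidistribute independently of the path in the lacunary limit — indeed this asymptotic independence is exactly the equidistribution mechanism you invoke. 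So $\Phi_N f$ converges in distribution to a \emph{decoupled} martingale-like object: conditioned on the path, it is a sum of independent, symmetric $X$-valued random variables. But then any sign pattern $\alpha$ constant on sibling pairs (in particular any generation-dependent pattern, which is exactly what defines $m_p$ in this paper) acts on the oscillation side by flipping signs of conditionally independent symmetric summands, hence does not change the limiting distribution of $\Phi_N(T_\alpha f)$. Combining this with your two transferences would give $\|T_\alpha f\|_{L^p_X}\le (A_p/a_p)\|f\|_{L^p_X}$ for \emph{every} Banach space $X$, i.e., every Banach space would be UMD with constant at most $A_p/a_p$ — false for $L^1$, $c_0$, or quantitatively for $\ell^1_n$ as $n\to\infty$. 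This is the content of the equivalence between UMD and two-sided decoupling of tangent martingales (McConnell): comparing a coupled martingale with a decoupled, differently-driven one is precisely the geometry of $X$, so your step "only averages $\|\cdot\|_X^p$ and never invokes the geometry of $X$" is where the argument breaks, irreparably in this formulation.

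The paper threads exactly this needle differently, and the contrast is instructive. Its embedding keeps the generators $\pm 1$-valued, $\varphi^+(\theta)=\operatorname{sign}\cos\theta$ and $\varphi^-(\theta)=\operatorname{sign}\sin\theta$, so that the coupled tree structure is preserved and $F(\theta)$ has \emph{exactly} the same distribution as $f(x)$: the transference is a free isometry, with no constants and no appeal to the geometry of $X$. The price is that $\mathcal{H}\varphi^{\sigma}$ is no longer a multiple of $\varphi^{\bar\sigma}$ pointwise; the identity $\pi\mathcal{H}\varphi^\sigma=c_0\mathcal{S}_0\varphi^\sigma$ only holds after projection $\pi$ onto quarter-arc averages (Lemma \ref{lemma-projection-hilbert}). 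This is why the paper runs the whole argument in a weak bilinear form: when $\mathcal{H}\varphi^\sigma(\theta_{k+1})$ is paired against the test side built from $\varphi^\eta(\theta_{k+1})$, which is constant on quarter arcs, only the projection survives, and $\mathbb{E}\langle F^{\mathcal H},G\rangle=c_0\mathbb{E}\langle\mathcal{S}_0F,G\rangle$ follows exactly. The lacunary frequencies ($n_{k+1}=2n_kN_k$) play a different role than in your proposal: not to decouple distributions, but to make the Hilbert transform in the single modulation variable $\psi$ see only the top-frequency factor (Lemma \ref{lemma-inside-H}), so that $\mathcal{H}$ is applied once and its norm enters linearly. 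If you want to salvage your approach, the lesson is that the embedding must be measure-preserving (generators with the same law as the sign tosses), and the intertwining with $\mathcal{H}$ should then be demanded only in a weak/averaged sense rather than pointwise up to small errors.
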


The remaining part of this text is dedicated to the proof of Theorem
\ref{theorem-main}.

\section{Proof}

\paragraph{Using sign tosses $\varepsilon$}

Write again the identity for functions highlighting contributions from left
and right halves of intervals, in other words $\mathcal{D}^-$ and
$\mathcal{D}^+$:
\[ f = \langle f \rangle_{I_0} + (f, h_{I_0}) h_{I_0} + \sum_{k = 0}^{\infty}
   \sum_{I : |I| = 2^{- k} |I_0 |} (f, h_{I_-}) h_{I_-} + (f, h_{I_+}) h_{I_+}
   . \]
Let us now encode the sign tosses in a way that respects a shift operator of
complexity one (as opposed to complexity zero for $T_{\alpha}$). In the first
step, we just choose $\varepsilon_0 = \pm$ as the first sign toss. Then,
depending on the outcome of $\varepsilon_0$, we use generators
$\varepsilon_1^-$ and $\varepsilon_1^+$. This means, if via the previous
tosses, we arrived in a $\mathcal{D}^{\pm}$ interval, then
$\varepsilon^{\pm}_1$ is the relevant toss. They are independent with
probability 1/2 each. So the above can be rewritten as
\begin{eqnarray*}
  &  & \mathd f_{- 2} + \mathd f_{- 1} \varepsilon_0 
  \\
  &  & +\sum^{\infty}_{k = 0} \mathd f^+_k  (\varepsilon_0, \varepsilon^-_1,
  \varepsilon^+_1, \ldots, \varepsilon^+_k) \varepsilon^+_{k + 1} + \mathd
  f^-_k  (\varepsilon_0 \nocomma, \varepsilon^-_1, \varepsilon^+_1, \ldots,
  \varepsilon^+_k) \varepsilon_{k + 1}^- .
\end{eqnarray*}
\[ \  \]
We have $\mathd f_{- 2} = \langle f \rangle_{I_0}$ and $\mathd f_{- 1} = (f,
h_{I_0}) |I_0 |^{- 1 / 2}$. The second and third summands above involve
$\mathd f_0^{\pm}$, where
\[ \mathd f_0^- (\varepsilon_0) = \left\{ \begin{array}{ll}
     (f, h_{I_{0 -}}) |I_{0 -} |^{- 1 / 2} & \tmop{if} \varepsilon_0 = -\\
     0 & \tmop{if} \varepsilon_0 = +
   \end{array} \right. \]
\[ \mathd f_0^+ (\varepsilon_0) = \{ \begin{array}{ll}
     0 & \tmop{if} \varepsilon_0 = -\\
     (f, h_{I_{0 +}}) |I_{0 +} |^{- 1 / 2} & \tmop{if} \varepsilon_0 = +
   \end{array} . \]
Then $\mathd f^{\pm}_1  (\varepsilon_0, \varepsilon^+_1, \varepsilon^-_1)$
depends on $\varepsilon_0$ and depending on the outcome of $\varepsilon_0$ on
either $\varepsilon^+_1$ or $\varepsilon^-_1$. Depending on the outcome of the
relevant $\varepsilon_1$ we determine if $\mathd f_1^+$ or $\mathd f_1^-$ is
active. This is certainly not the most concise notation, but may be more
clearly resembling a sum over $\mathcal{D}_+$ and $\mathcal{D}_-$.

\paragraph{Using angles $\theta$}

Let us change to a random generator to facilitate the use of the norm of the
Hilbert transform. Let \ $\varphi_{\nosymbol}^+ (\theta_{\nosymbol}) =
\tmop{signcos} \theta_{\nosymbol}$ and $\varphi_{\nosymbol}^- (\theta) =
\tmop{signsin} \theta$. Set for $\theta_j \in [- \pi, \pi]$ with $j \in
\mathbb{N}_0$, $ \varepsilon_0= \varphi^+  (\theta_0)$ as the first sign toss.
Then, $\varepsilon_j^-=\varphi^-  (\theta_j)$ and $\varepsilon_j^+=\varphi^+  (\theta_j)$ 
for $j > 0$.  In order to get a more concise notation, we write $\theta=(\theta_0,\ldots)$ and $\vec{\theta}_k=(\theta_0,\ldots,\theta_k)$. Together we receive
\begin{eqnarray*}
  F (\theta) & = & \mathd F_{- 2} + \mathd F_{- 1} \varphi^+ (\theta_0) \\
  &  &+ \sum^{\infty}_{k = 0} \mathd F^+_k  (\vec{\theta}_k)
  \varphi^+ (\theta_{k + 1}) + \mathd F^-_k  (\vec{\theta}_k)
  \varphi^- (\theta_{k + 1})
\end{eqnarray*}
with $\mathd F_k = \mathd f_k$ for $k = - 1, - 2$ and for $k \geqslant 0$
\[ \mathd F_k^{\pm}  (\vec{\theta}_k) = \mathd f_k^{\pm} 
   (\varphi^+ (\theta_0), \varphi^- (\theta_1), \varphi^+ (\theta_1), \ldots,
   \varphi^- (\theta_k), \varphi^+ (\theta_k)) . \]

\paragraph{Fourier side, modulation and action of $\mathcal{H}$}

Let us assume for the moment that all sums are finite, including all Fourier
series if we expand in the $\theta$. This will be accomplished later by a
standard limiting procedure. With $\sigma = \pm$ observe
\[ \theta \mapsto \mathd F^{\sigma}_k  (\vec{\theta}_k)
   \varphi^{\sigma} (\theta_{k + 1}) \infixor \theta \mapsto \mathd
   F^{\sigma}_k  (\vec{\theta}_k) \mathcal{H} \varphi^{\sigma}
   (\theta_{k + 1}) \]
have a Fourier series with terms of the form
\[ e^{il_0 \theta_0} \ldots e^{il_{k + 1} \theta_{k + 1}} x_{l_0, \ldots, l_{k
   + 1}}, \]
where $x_{l_0, \ldots, l_{k + 1}} \in X$ and where these terms are summed in
the $l_j \in \mathbb{Z}$. Recall that $\int \varphi^{\sigma} (\theta_{k + 1})
= 0$ and $\int \mathcal{H} \varphi^{\sigma} (\theta_{k + 1}) = 0$ and thus
$x_{l_0, \ldots, l_k, 0} = 0$, so that we may assume in what follows $l_{k +
1} \neq 0$.

Using that all sums are finite, build a sequence $N= (N_k)_{k\geqslant0}$ so that there holds for the spectra $| \gamma_0 | + \ldots + | \gamma_k |
\leqslant N_k$  (only non-zero contributions if $|l_i |
\leqslant | \gamma_i |$). Let us define a sequence $n=(n_k)_{k\geqslant 0}$ inductively by
\[ n_0 = 1, n_{k + 1} = 2 n_k N_k . \]
Write $\vec{\theta}_k+\vec{n}_k\psi=(\theta_0 + n_0 \psi, \ldots, \theta_k + n_k \psi)$.
Then the Fourier series of
\[ \psi \mapsto \mathd F^{\sigma}_k  (\vec{\theta}_k+ \vec{n}_k \psi) \varphi^{\sigma}  (\theta_{k + 1} + n_{k + 1} \psi) \]
or
\[ \psi \mapsto \mathd F^{\sigma}_k  (\vec{\theta}_k + \vec{n}_k \psi) \mathcal{H} \varphi^{\sigma}  (\theta_{k + 1} + n_{k + 1} \psi)
\]
have summands of the form
\[ e^{il_0  (\theta_0 + n_0 \psi)} \ldots e^{il_{k + 1}  (\theta_{k + 1} +
   n_{k + 1} \psi)} x_{l_0, \ldots, l_{k + 1}} \]
with $x_{l_0, \ldots, l_k, 0} = 0$. To take the Hilbert transform in the
variable $\psi$, we must understand the sign of $l_0 n_0 + \ldots + l_k n_k +
l_{k + 1} n_{k + 1}$. Notice that, provided $l_{k + 1} \neq 0$,
\[ |l_0 n_0 + \ldots + l_k n_k | \leqslant (|l_0 | + \ldots + |l_k |) n_k
   \leqslant N_k n_k < 2 N_k n_k = n_{k + 1} \leqslant |l_{k + 1} n_{k + 1} |
\]
by the assumption on the spectrum and so the sign of $l_{k + 1}$ will
determine the sign of the sum \ $l_0 n_0 + \ldots + l_k n_k + l_{k + 1} n_{k +
1}$. For this reason $\mathcal{H}$ only sees the high frequencies of
$\varphi$. Considering just the signum of $l_{k + 1}$ means we take the
Hilbert transform of $\varphi^{\sigma}$ directly:
\begin{eqnarray}\label{modulationidentity}
  &  & \mathcal{H}_{\psi}  (\mathd F^{\sigma}_k (\vec{\theta}_k + \vec{n}_k \psi) \varphi^{\sigma} (\theta_{k + 1} + n_{k + 1} \psi))\\
  \nonumber& = & \mathd F^{\sigma}_k  (\vec{\theta}_k + \vec{n}_k \psi) \mathcal{H} (\varphi^{\sigma})  (\theta_{k + 1} + n_{k + 1} \psi) .
\end{eqnarray}
For ease of notation, we write $F^{\mathcal{H}}$ for $F$ with the Hilbert
transform applied in the last variable of each increment (such as in the
equation above) and for the modulated versions, depending upon the sequence $N
= (N_k)_{k\geqslant0}$, we write
\[ \Phi_{\theta, N} (\psi) = F (\vec{\theta} + \vec{n} \psi) \infixand \Phi^{\mathcal{H}}_{\theta, N} (\psi) = F^{\mathcal{H}} 
   (\vec{\theta} + \vec{n} \psi)  \]
   and similar for $G$ and $\Gamma$.

\paragraph{Action of $\mathcal{S}_0$}

$\mathcal{S}_0$ understood in the language of sign tosses maps as follows:
\[ \mathcal{S}_0 : \mathd f^{\pm}_k  (\varepsilon_{0,} \varepsilon^+_1,
   \varepsilon^-_1, \ldots, \varepsilon^-_k) \varepsilon^{\pm}_{k + 1} \mapsto
   \pm \mathd f^{\pm}_k  (\varepsilon^+_1, \varepsilon^-_1, \ldots,
   \varepsilon^-_k) \varepsilon_{k + 1}^{\mp} . \]
and note that in the language involving angles we have $\mathcal{S}_0
\varphi^- = - \varphi^+ \tmop{and} \mathcal{S}_0 \varphi^+ = \varphi^-$, that
is
\[ \mathcal{S}_0 \varphi^{\sigma} = \sigma \varphi^{\bar{\sigma}} \]
for $\sigma = \pm$ and $\bar{\sigma}$ the opposing sign.

We will require the following crucial lemma.

\begin{lemma}
  \label{lemma-inside-H}{\tmdummy}
  There holds
  \begin{eqnarray*} 
  & &\left| \mathbb{E}^{\theta} \langle \sum_{k = 0}^{\infty} \mathd
     F^{\sigma}_k (\vec{\theta}_k)\mathcal{H} \varphi^{\sigma}
     (\theta_{k + 1}), \sum_{l = 0}^{\infty} \mathd G^{\eta}_l (\vec{\theta}_l) \varphi^{\eta} (\theta_{l + 1}) \rangle_{X, X^{\ast}}
     \right| \\
     &\leqslant & h_p \| F \|_{L_X^p} \| G \|_{L_{X^{\ast}}^q} . 
     \end{eqnarray*}
\end{lemma}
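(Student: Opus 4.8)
The plan is to strip the Hilbert transform out from ``inside'' the martingale expansion using the high--frequency modulation already prepared in \eqref{modulationidentity}, so that the single scalar bound $h_p$ can be invoked exactly once. Fix the signs $\sigma,\eta\in\{+,-\}$ and abbreviate by $A_\sigma=\sum_{k\ge 0}\mathd F_k^\sigma(\vec{\theta}_k)\varphi^\sigma(\theta_{k+1})$ the $\sigma$--part of $F$ and by $B_\eta=\sum_{l\ge 0}\mathd G_l^\eta(\vec{\theta}_l)\varphi^\eta(\theta_{l+1})$ the $\eta$--part of $G$; the quantity to be estimated is then $\mathbb{E}^\theta\langle A_\sigma^{\mathcal{H}},B_\eta\rangle$, where $A_\sigma^{\mathcal{H}}$ carries $\mathcal{H}$ on each last factor.

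First I freeze $\theta$ and pass to the modulated functions $\Phi(\psi)=A_\sigma(\vec{\theta}+\vec{n}\psi)$ and $\Gamma(\psi)=B_\eta(\vec{\theta}+\vec{n}\psi)$, with $\vec n$ the sequence built just before the lemma. Applying \eqref{modulationidentity} to each increment and summing the (finite) series gives $\mathcal{H}_\psi\Phi=A_\sigma^{\mathcal{H}}(\vec{\theta}+\vec{n}\psi)$: the Hilbert transform in the genuine variable $\psi$ reproduces precisely the modulated $\sigma$--part of $F^{\mathcal{H}}$. Since for every fixed $\psi$ the coordinatewise rotation $\theta\mapsto\vec{\theta}+\vec{n}\psi$ preserves the underlying product measure, I may rewrite
\[ \mathbb{E}^\theta\langle A_\sigma^{\mathcal{H}},B_\eta\rangle=\mathbb{E}^\theta\,\mathbb{E}^\psi\langle \mathcal{H}_\psi\Phi(\psi),\Gamma(\psi)\rangle. \]
For fixed $\theta$ the inner integral is a pairing of $\mathcal{H}_\psi\Phi$ against $\Gamma$ in the single variable $\psi$, so H\"older in $\psi$ together with $\|\mathcal{H}\|_{L^p_X\to L^p_X}=h_p$ gives the pointwise--in--$\theta$ bound $h_p\,\|\Phi\|_{L^p_X(\psi)}\,\|\Gamma\|_{L^q_{X^\ast}(\psi)}$. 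A second H\"older, now in $\theta$, and one more use of the shift invariance, via $\mathbb{E}^\theta\mathbb{E}^\psi\|\Phi\|^p=\|A_\sigma\|_{L^p_X}^p$ and likewise for $\Gamma$, collapse everything to
\[ \bigl|\mathbb{E}^\theta\langle A_\sigma^{\mathcal{H}},B_\eta\rangle\bigr|\le h_p\,\|A_\sigma\|_{L^p_X}\,\|B_\eta\|_{L^q_{X^\ast}}. \]

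It remains to replace $\|A_\sigma\|_{L^p_X}$ and $\|B_\eta\|_{L^q_{X^\ast}}$ by $\|F\|_{L^p_X}$ and $\|G\|_{L^q_{X^\ast}}$, and this is exactly the point where the estimate must stay linear. In the Haar picture $A_\sigma$ is the projection onto the Haar functions sitting on $\sigma$--children, that is, the martingale transform multiplying the $k$--th martingale difference of $F$ by the predictable $v_k=\mathbf{1}_{\{\varepsilon_k=\sigma\}}\in\{0,1\}$. Crucially this multiplier is $\{0,1\}$--valued and not $\pm 1$--valued, so it is a genuine contraction on $L^p_X$ for every Banach space $X$, rather than merely bounded by the $\mathrm{UMD}$ constant: killing one difference on a predictable set is, after conditioning, a convex average, and the elementary inequality $\|a+b\|^p+\|a-b\|^p\ge 2\|b\|^p$ yields the contraction one difference at a time, after which the single--step contractions compose. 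Hence $\|A_\sigma\|_{L^p_X}\le\|F\|_{L^p_X}$ and dually $\|B_\eta\|_{L^q_{X^\ast}}\le\|G\|_{L^q_{X^\ast}}$, which closes the estimate.

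The step I expect to be most delicate is not the modulation bookkeeping but the justification that $\mathcal{H}_\psi$ may be taken termwise through the expansion and that the ``all sums finite'' reduction is harmless: one must arrange the spectra so that \eqref{modulationidentity} applies simultaneously to every surviving increment, and then remove the truncation by the announced limiting argument while keeping the constant equal to $h_p$. The conceptual subtlety, by contrast, is the recognition that the sign--projection is a $\{0,1\}$--transform and therefore contractive, since this is precisely what prevents a $\mathrm{UMD}$ factor from entering and keeps the relation between $s_p$ and $h_p$ linear.
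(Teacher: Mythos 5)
Your first two paragraphs reproduce the paper's argument exactly: the modulation, the fact that the coordinatewise rotation $\theta\mapsto\vec{\theta}+\vec{n}\psi$ preserves the product measure, the termwise use of (\ref{modulationidentity}), and the two H\"older steps that spend the factor $h_p$ exactly once. All of that is correct. The proof breaks in your third paragraph. The inequality $\|A_\sigma\|_{L^p_X}\le\|F\|_{L^p_X}$ is a bound for a martingale transform with a predictable $\{0,1\}$-valued multiplier, equivalently for the Haar projection $f\mapsto\sum_{I\in\mathcal{D}^\sigma}(f,h_I)h_I$, and no such bound is available in a general Banach space: writing $\epsilon_k=2v_k-1$, one has $\sum_k\epsilon_kd_k=2\sum_kv_kd_k-\sum_kd_k$ for any martingale difference sequence $(d_k)$, so if every predictable $\{0,1\}$-multiplier transform were an $L^p_X$-contraction, every Banach space would satisfy the UMD inequality with constant at most $3$ --- false for $L^1$, $c_0$, indeed for any non-reflexive space. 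Your ``elementary'' argument must therefore fail somewhere, and it fails at the sign flip: from $\|a+b\|^p+\|a-b\|^p\ge 2\|b\|^p$ with $a=d_j$, $b=f-d_j$ you get $\|f\|_p^p+\|f-2d_j\|_p^p\ge 2\|f-d_j\|_p^p$, and to conclude $\|f-d_j\|_p\le\|f\|_p$ you need $\|f-2d_j\|_p\le\|f\|_p$. Flipping the sign of a single \emph{middle} difference is a norm-preserving operation only when no later difference depends on the $j$-th toss; here the later increments depend on it in an essential way (which child you occupy at step $j$ steers the whole future of the path), and norm invariance under such flips is precisely the unconditionality one would be trying to prove --- the argument is circular. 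Nor is killing a middle difference a conditional expectation of $f$, since the later differences are retained, so the ``convex average'' reading is not available either. Patching the step with the UMD constant, $\|A_\sigma\|_p\le\tfrac{1+m_p}{2}\|F\|_p$, puts $h_pm_p$ on the right-hand side and destroys exactly the linearity the lemma exists to deliver.

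The trap was the decision to fix $\sigma,\eta$ and work with the parts $A_\sigma$, $B_\eta$. The paper never separates them: it runs precisely your first two paragraphs with the \emph{full} sums $F$ and $G$, for which (\ref{modulationidentity}) holds increment by increment regardless of signature, so that $\mathcal{H}_\psi\Phi_{\theta,N}=\Phi^{\mathcal{H}}_{\theta,N}$ for the whole modulated function and H\"older produces $h_p\|F\|_{L^p_X}\|G\|_{L^q_{X^\ast}}$ directly; no projection bound is ever needed. This summed form is also the only form in which the lemma is invoked later (the weak form pairs the full $F^{\mathcal{H}}$ against the full $G$; the terms with $k\ne l$ or $\sigma=\eta$ vanish upon integration, so nothing is lost). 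If one insists on the literal per-signature statement with $\|F\|_{L^p_X}\|G\|_{L^q_{X^\ast}}$ on the right, one is again forced to control the signature projections, a UMD-strength statement. So the repair is simple: delete your third paragraph and replace $A_\sigma$, $B_\eta$ by $F$, $G$ throughout; your proof then coincides with the paper's.
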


\begin{proof}
  By a standard approximation argument, we may assume the above sums are
  finite and all functions of $\theta$ have finite spectrum. We modulate as
  described above and pull out the Hilbert transform as follows:
  \begin{eqnarray*}
    | \mathbb{E}^{\theta} \langle F^{\mathcal{H}} (\theta), G (\theta)
    \rangle_{X, X^{\ast}} | & = & | \mathbb{E}^{\psi} \mathbb{E}^{\theta}
    \langle F^{\mathcal{H}} (\theta), G (\theta) \rangle_{X, X^{\ast}} |\\
    & = & | \mathbb{E}^{\psi} \mathbb{E}^{\theta} \langle \Phi_{\theta,
    N}^{\mathcal{H}} (\psi), \Gamma_{\theta, N} (\psi) \rangle_{X, X^{\ast}} |
    .
  \end{eqnarray*}
  We tend to the second equality above. It follows immediately if we observe that $|\mathbb{E}^{\theta}  
  \langle \phi^{\mathcal{H}}_{\theta, N} (\psi), \Gamma_{\theta, N} (\psi)
  \rangle_{X, X^{\ast}} |$ does not depend upon $N$ or $\psi$. Indeed, the
  terms that arise are of the form
  \[ \langle \mathd F_k^{\sigma} (\vec{\theta}_k + \vec{n}_k \psi)\mathcal{H} \varphi^{\sigma} (\theta_{k + 1} + n_{k + 1} \psi),
     \mathd G_l^{\eta} (\vec{\theta}_l + \vec{n}_l \psi)
     \varphi^{\eta} (\theta_{l + 1} + n_{l + 1} \psi) \rangle_{X, X^{\ast}} .
  \]
  A successive integration in the
  $\theta$ shows by periodicity of the involved functions an
  independence from $\psi$ and $N$. 
  
  We now use equality (\ref{modulationidentity}) and continue to estimate
  \begin{eqnarray*}
    | \mathbb{E}^{\psi} \mathbb{E}^{\theta} \langle \Phi_{\theta,
    N}^{\mathcal{H}} (\psi), \Gamma_{\theta, N} (\psi) \rangle_{X, X^{\ast}} |
    & = & | \mathbb{E}^{\psi} \mathbb{E}^{\theta} \langle \mathcal{H}_{\psi}
    \Phi_{\theta, N} (\psi), \Gamma_{\theta, N} (\psi) \rangle_{X, X^{\ast}}
    |\\
    & \leqslant & h_p \| F \|_{L_X^p} \| G \|_{L_{X^{\ast}}^q},
  \end{eqnarray*}
  where we again used independence of the expectation of the modulations of
  $\psi$.
\end{proof}

\paragraph{Comparing $\mathcal{H}$ and $\mathcal{S}_0$ in a projected form}

Let us also define the operator $\pi$ on functions $f$ defined on $\mathbb{T}$
by $\pi (f) (x) = \sum^1_{i = - 2} \langle f \rangle_{A_i}$, where the arcs
$A_i$ correspond to angles $[i \pi / 2, i \pi / 2 + \pi / 2)$.

\begin{lemma}
  \label{lemma-projection-hilbert}There exists $c_0 > 0$ such that for both
  signatures $\sigma$ there holds
  \[ \pi \mathcal{H} \varphi^{\sigma} = c_0 \mathcal{S}_0 \varphi^{\sigma} .
  \]
\end{lemma}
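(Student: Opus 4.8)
The plan is to reduce the identity to an explicit computation of the four arc-averages of the conjugate functions $\mathcal{H}\varphi^{\sigma}$, since $\pi$ is exactly the conditional expectation onto the four quadrants. First I would record the values of the generators on $(A_{-2}, A_{-1}, A_0, A_1)$: one checks directly from the definitions that $\varphi^+$ takes the values $(-1, +1, +1, -1)$ and $\varphi^-$ takes the values $(-1, -1, +1, +1)$. Hence, by $\mathcal{S}_0 \varphi^{\sigma} = \sigma \varphi^{\bar{\sigma}}$, the right-hand sides are the piecewise constant patterns $\mathcal{S}_0 \varphi^+ = \varphi^- = (-1, -1, +1, +1)$ and $\mathcal{S}_0 \varphi^- = -\varphi^+ = (+1, -1, -1, +1)$. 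It then suffices to show, for each signature, that the vector of arc-averages of $\mathcal{H}\varphi^{\sigma}$ equals one and the same positive multiple $c_0$ of this pattern.

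Next I would expand the square waves as $\varphi^+ = \frac{4}{\pi} \sum_{k \geqslant 0} \frac{(-1)^k}{2k+1} \cos((2k+1)\theta)$ and $\varphi^- = \frac{4}{\pi} \sum_{k \geqslant 0} \frac{1}{2k+1} \sin((2k+1)\theta)$, and apply $\mathcal{H}$ termwise using $\mathcal{H}\cos(n\theta) = \sin(n\theta)$ and $\mathcal{H}\sin(n\theta) = -\cos(n\theta)$ for $n > 0$. This produces the odd function $\mathcal{H}\varphi^+ = \frac{4}{\pi} \sum_{k} \frac{(-1)^k}{2k+1} \sin((2k+1)\theta)$ and the even function $\mathcal{H}\varphi^- = -\frac{4}{\pi} \sum_{k} \frac{1}{2k+1} \cos((2k+1)\theta)$; the parities match those of the target patterns $\varphi^-$ and $-\varphi^+$, which is the structural reason the lemma can hold. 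Integrating termwise over $A_0 = [0, \pi/2)$ and $A_1 = [\pi/2, \pi)$, the elementary integrals $\int_0^{\pi/2} \sin((2k+1)\theta)\,\mathd\theta$ and their analogues collapse, and every relevant arc-integral reduces to $\pm \frac{4}{\pi} \sum_{k} \frac{(-1)^k}{(2k+1)^2} = \pm \frac{4}{\pi} G$, where $G$ is Catalan's constant. Dividing by the arc length $\pi/2$ yields arc-averages $\pm \frac{8G}{\pi^2}$, and the two remaining quadrants follow from the parity of $\mathcal{H}\varphi^{\sigma}$. Reading off the signs, both computations return exactly $c_0 \mathcal{S}_0 \varphi^{\sigma}$ with $c_0 = \frac{8G}{\pi^2} > 0$.

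The step demanding the most care is the bookkeeping that a single positive constant serves all four arcs and both signatures at once, which is the real content of the lemma and is not implied by proportionality on one arc alone. Here the parity observation carries most of the weight, pairing $A_0$ with $A_{-1}$ and $A_1$ with $A_{-2}$ and thereby fixing the relative signs, while the identical value $\frac{4}{\pi} G$ of the surviving series in every case forces the common magnitude. A minor technical point is the justification of termwise integration for $\mathcal{H}\varphi^-$, whose series has an integrable logarithmic singularity at the quadrant endpoints $\theta = 0, \pm\pi$; this is handled by dominated convergence (or Abel summation), and finiteness of the averages is never in doubt. Alternatively one could sum the series in closed form, involving $\ln|\tan(\theta/2)|$, and integrate directly, but the termwise route is what exposes Catalan's constant transparently.
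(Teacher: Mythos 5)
Your proof is correct, and it verifies the identity by a genuinely different route than the paper. The paper's argument is soft: setting $g = \mathcal{H}\chi_{(-\pi/2,\pi/2)}$, it uses translation invariance to write $\mathcal{H}\varphi^+ = g(\cdot) - g(\cdot+\pi)$ and $\mathcal{H}\varphi^-(\cdot) = \mathcal{H}\varphi^+(\cdot - \pi/2)$, then invokes the parities of $\mathcal{H}\varphi^{\pm}$ (odd, resp.\ even), the positivity of $\mathcal{H}\varphi^+$ on $(0,\pi)\setminus\{\pi/2\}$, and its symmetry about $x=\pi/2$ to conclude that the four quadrant averages form the patterns $c_0(-1,-1,+1,+1)$ and $c_0(+1,-1,-1,+1)$ for one and the same \emph{unspecified} $c_0>0$; the translation relation between $\mathcal{H}\varphi^-$ and $\mathcal{H}\varphi^+$, which permutes the quadrants cyclically, is what guarantees a single constant serves both signatures. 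You instead compute everything: the square-wave Fourier expansions, termwise conjugation, and termwise quadrant integration, arriving at the explicit value $c_0 = 8G/\pi^2$ with $G$ Catalan's constant. Your sign bookkeeping is right (I checked all eight averages: $\int_0^{\pi/2}\sin((2k+1)\theta)\,\mathd\theta = \int_{\pi/2}^{\pi}\sin((2k+1)\theta)\,\mathd\theta = \tfrac{1}{2k+1}$ and $\int_0^{\pi/2}\cos((2k+1)\theta)\,\mathd\theta = -\int_{\pi/2}^{\pi}\cos((2k+1)\theta)\,\mathd\theta = \tfrac{(-1)^k}{2k+1}$ give exactly the claimed patterns), and your parity pairing of $A_0$ with $A_{-1}$ and $A_1$ with $A_{-2}$ plays the role of the paper's symmetry-about-$\pm\pi/2$ argument. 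What your approach buys: an explicit constant, hence in principle an explicit (if not optimal) constant in Theorem \ref{theorem-main}; it also makes the equal-magnitude claim transparent, since every surviving series is $\pm\tfrac{4}{\pi}\sum_k (-1)^k/(2k+1)^2$. What the paper's approach buys: brevity, no series manipulations, and no need to justify termwise integration --- a point you correctly flag and handle (the cleanest justification is simply that $\mathcal{H}\varphi^{\sigma}\in L^2(\mathbb{T})$ and integration over an arc is pairing with an $L^2$ function, so Parseval applies). One remark: the sign of $c_0$, in your computation via the convention $\mathcal{H}\cos(n\theta)=\sin(n\theta)$, $\mathcal{H}\sin(n\theta)=-\cos(n\theta)$, and in the paper via the claim $\lim_{x\to\pi/2}g(x)=+\infty$, depends on the orientation convention for $\mathcal{H}$; the two conventions agree, and in any case only $c_0\neq 0$ matters for the weak-form argument that follows.
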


\begin{proof}
  $g (x) =\mathcal{H} \chi_{(- \pi / 2, \pi / 2)}$ is an odd function with
  zeros at $\pm \pi$ and 0. There is a singularity at $\pm \pi / 2$. One can
  verify (for example by inspecting the explicit expression of the singular
  integral) that $\lim_{x \rightarrow \pi / 2} \mathcal{H} \chi_{(- \pi / 2,
  \pi / 2)} (x) = + \infty$. By translation invariance and antisymmetry of
  $\mathcal{H}$, we know that $\mathcal{H} \varphi^+ (x) = g (x) - g (x +
  \pi)$. By a similar argument, we get $\mathcal{H} \varphi^- (x) = g (x - \pi
  / 2) - g (x + \pi / 2) =\mathcal{H} \varphi^+  (x - \pi / 2)$. We also know
  that $\mathcal{H} \varphi^+ (x)$ is odd and that $\mathcal{H} \varphi^- (x)$
  is even. From this, we deduce that $\mathcal{H} \varphi^+ |_{(0, \pi)
  \setminus \{\pi / 2\}}$ is positive and symmetric about the axis $x = \pi /
  2$ and that $\mathcal{H} \varphi^+ |_{(- \pi, 0) \setminus \{- \pi / 2\}}$
  is negative and symmetric about $x = - \pi / 2$. Gathering the information,
  we obtain
  \[ \pi \mathcal{H} \varphi^+ = c_0 \varphi^- = c_0 \mathcal{S}_0 \varphi^+
     \tmop{and} \pi \mathcal{H} \varphi^- = - c_0 \varphi^+ = c_0
     \mathcal{S}_0 \varphi^- \]
  for some $c_0 > 0$. 
\end{proof}

Notice the elementary fact for functions $f, g \in L^2$:
\[ (\pi f, g)_{L^2} = (\pi f, \pi g)_{L^2} = (f, \pi g)_{L^2} . \]

\paragraph{The weak form}

For $k, l \geqslant 0$ we consider terms of the form
\begin{eqnarray*}
  &  & \mathbb{E}^{\theta}  \langle \mathcal{S}_0 \mathd F^{\sigma}_k(\vec{\theta}_k)
  \varphi^{\sigma} (\theta_{k + 1}), \mathd G_l^{\eta} (\vec\theta_l) \varphi^{\eta}
  (\theta_{l + 1}) \rangle_{X, X^{\ast}}\\
  & = & \mathbb{E}^{\theta}  \langle \mathd F_k^{\sigma} (\vec\theta_k)
  (\mathcal{S}_0 \varphi^{\sigma}) (\theta_{k + 1}), \mathd G_l^{\eta}
  (\vec\theta_l) \varphi^{\eta} (\theta_{l + 1}) \rangle_{X, X^{\ast}}\\
  & = & \mathbb{E}^{\theta}  \langle \mathd F_k^{\sigma} (\vec\theta_k) (\sigma
  \varphi^{\bar{\sigma}}) (\theta_{k + 1}), \mathd G_l^{\eta} (\vec\theta_l)
  \varphi^{\eta} (\theta_{l + 1}) \rangle_{X, X^{\ast}} .
\end{eqnarray*}
Terms where $k \neq l$ are zero after the integration in $\theta_{\max (l, k)
+ 1}$. Let us assume $l = k$. If $\sigma = \eta$, then $\bar{\sigma} \neq
\eta$ and $\mathbb{E}^{\theta_k}  (\sigma \varphi^{\bar{\sigma}})  (\theta_{k
+ 1}) \varphi^{\eta}  (\theta_{k + 1}) = 0$. So $k = l$ and $\bar{\sigma} =
\eta$ are the only arising terms.

Equally, we consider the action under $\mathcal{H}$.
\begin{eqnarray*}
  &  & \mathbb{E}^{\theta}  \langle \mathd F_k^{\sigma} (\vec\theta_k)\mathcal{H}
  \varphi^{\sigma} (\theta_{k + 1}), \mathd G_l^{\eta} (\vec\theta_l) \varphi^{\eta}
  (\theta_{l + 1}) \rangle_{X, X^{\ast}}
\end{eqnarray*}
Only diagonal terms remain because integration in $\theta_{\max (l, k) + 1}$
yields again 0 if $l \neq k$. Let now $l = k$, then $\varphi^{\eta}  (\theta_{k + 1})$ is constant on
the quarters of $[- \pi, \pi]$. Thus by Lemma \ref{lemma-projection-hilbert}
\begin{eqnarray*}
  \mathbb{E}^{\theta_{k + 1}}  (\mathcal{H} \varphi^{\sigma})  (\theta_{k +
  1}) \varphi^{\eta}  (\theta_{k + 1}) & = & \mathbb{E}^{\theta_{k + 1}}  (\pi
  \mathcal{H} \varphi^{\sigma})  (\theta_{k + 1}) \varphi^{\eta}  (\theta_{k +
  1})\\
  & = & c_0 \mathbb{E}^{\theta_{k + 1}}  (\mathcal{S}_0 \varphi^{\sigma}) 
  (\theta_{k + 1}) \varphi^{\eta}  (\theta_{k + 1})
\end{eqnarray*}
and we have together, when $\langle f \rangle_{I_0} = 0$ and $(f, h_{I_0}) =
0$
\[ \mathbb{E}^{\theta}  \langle F^{\mathcal{H}} (\theta), G (\theta)
   \rangle_{X, X^{\ast}} = c_0 \mathbb{E}^{\theta}  \langle \mathcal{S}_0 F
   (\theta), G (\theta) \rangle_{X, X^{\ast}} . \]
Putting things together, we estimate assuming $\langle f \rangle_{I_0} = 0$
and $(f, h_{I_0}) = 0$ that
\begin{eqnarray*}
  | \mathbb{E}^x \langle \mathcal{S}_0 f (x), g (x) \rangle_{X, X^{\ast}} | &
  = & |\mathbb{E}^{\theta} \langle \mathcal{S}_0 F (\theta), G (\theta)
  \rangle_{X, X^{\ast}} |\\
  & = & c^{- 1}_0  |\mathbb{E}^{\theta} \langle F^{\mathcal{H}} (\theta), G
  (\theta) \rangle_{X, X^{\ast}} |\\
  & \leqslant & h_p c^{- 1}_0 \|F\|_{L_{X,\theta}^p} \|G\|_{L_{X^{\ast},\theta}^q} \\
  &=& h_p c^{- 1}_0 \|f\|_{L_{X}^p} \|g\|_{L_{X^{\ast}}^q}.
\end{eqnarray*}
The first and last equalities hold because by construction $f(x)$ and $F(\theta)$ as well as $g(x)$ and $G(\theta)$ have the 
same probability distributions respectively. Notice that the random generators $\varphi^{\pm}$ are
independent. We have used Lemma \ref{lemma-inside-H} for the last inequality.
To finish the estimate for general $f$, write $\tilde{f} = f - (f, h_{I_0})
h_{I_0} - \langle f \rangle_{I_0} \chi_{I_0}$ and get
\[ \| \mathcal{S}_0 f \|_{L_X^p} = \| \mathcal{S}_0 \tilde{f} \|_{L_X^p}
   \leqslant c^{- 1}_0 h_p \| \tilde{f} \|_{L_X^p} \lesssim c^{- 1}_0 h_p \| f
   \|_{L_X^p} . \]
We have used that averaging operators are bounded.

Therefore, the bound of $\mathcal{S}_0$ is proportional to $h_p$ and our proof
of Theorem \ref{theorem-main} complete.

\

\end{document}